\newtheorem{lemma}[subsection]{Lemma.}
\newtheorem{thm}[subsection]{Theorem.}
\newtheorem{cor}[subsection]{Corollary.}
\theoremstyle{definition}
\newtheorem{defin}[subsection]{Definition.}
\newtheorem{example}[subsection]{Example.}
\newtheorem{remark}[subsection]{Remark.}
\newcommand{\brr}{\mbox{$\mathbb R$}}
\newcommand{\bv}{\mathbb V}
\newcommand{\bh}{\mathbb H}
\newcommand{\cc}{{\mathcal C}}
\newcommand{\cd}{{\mathcal D}}
\renewcommand{\phi}{\varphi}
\newcommand{\fg}{\mathfrak g}
\begin{document}
\title{Geometry of almost Cliffordian manifolds:
classes of subordinated connections}
\author{Jaroslav Hrdina, Petr Va\v{s}\' ik}

\address{Institute of Mathematics,\\ Faculty of Mechanical Engineering,\\
Brno University of Technology,\\ Czech Republic.}
\email{hrdina@fme.vutbr.cz, vasik@fme.vutbr.cz}

\subjclass{53C10, 53C15}
\keywords{Clifford algebra, affinor structure, $G$--structure,  linear connection, planar 
curves}

\begin{abstract}
An almost Clifford and an almost Cliffordian manifold is  
a $G$--structure based on the definition of
Clifford algebras.
An almost Clifford manifold based on $\mathcal O:= \cc l (s,t)$ is given by a reduction of the structure group
$GL(km, \mathbb R)$ to  $GL(m, {\mathcal O})$, where $k=2^{s+t}$
and $m \in \mathbb N$.
An almost Cliffordian manifold   is given by a reduction of the structure group
to  $GL(m, \mathcal O) GL(1,\mathcal O)$. We prove that an almost Clifford manifold
based on $\mathcal O$
is such that there exists a unique subordinated connection, while the case of
an almost  Cliffordian manifold based on $\mathcal O$
is more rich. A class of distinguished connections in this case is described explicitly.
\end{abstract}

\maketitle
  \section{Introduction}
First, let us recall some facts about a $G$-structures and their
prolongations. There are two definitions of $G$--structures. The first reads that a $G$--structure is
a principal bundle  $ P \rightarrow M$ with structure group $G$
together with a soldering form $\theta.$ The second reads that it is a
reduction of the frame bundle $P^1 M$ to the Lie group $G.$ In
the latter case, the soldering form $\theta$ is induced from a canonical soldering form on
the frame bundle.

Now let $\fg$ be the Lie algebra of the Lie group $G$ and let $\bv$ be a vector space. From the structure theory we know that there is
a $G$-invariant complement $\cd$ of $\partial (\fg \otimes
\bv^*)$ in $ \bv \otimes \wedge^2 \bv^*,$ where
$\partial $ is the operator of alternation, see \cite{ko}. Let us recall that the torsion of a linear connection lies in the space
$\bv \otimes \wedge^2 \bv^*.$

The almost  Clifford and almost Cliffordian structures are $G$-structures based on
Clifford algebras. Two most important examples are an almost
hypercomplex geometry and an almost quaternionic geometry, which are based
on Clifford algebra $\mathcal Cl(0,2)$. An important geometric property of almost hypercomplex structure reads that there is no nontrivial $G$-invariant
subspace $\cd$ in $ \bv \otimes \wedge^2 \bv^*,$ because the first prolongation $\fg^{(1)}$ of the Lie algebra $\fg$
vanishes.  For almost quaternionic structure, the situation is more
complicated, because $\fg^{(1)} = \bv^*,$ see \cite{am}.
For these reasons, in the latter case, there exists a distinguished class of linear connections compatible with the structure. Our goal is to describe some of
these connections for almost  Cliffordian $G$--structures based on Clifford algebras $\mathcal Cl(s,t)$ generally.

\section{Clifford algebras}
The pair $(\bv,Q)$, where $\bv$ is a vector space of dimension $n$ and $Q$ is a quadratic form is
called a quadratic vector space.
% Note that, Clifford algebra $\mathcal Cl(V,Q)$ of quadratic
% vector space $(V,Q)$ is an associative unital algebra, with natural filtration
% and a $\bz_2$ grading.
To define Clifford algebras in coordinates, we start by choosing a basis $e_i,\ i=1,...,n$ of $\bv$
and by $I_i, \ i=1,...,n$ we denote the image of $e_i$ under the inclusion $\bv \hookrightarrow \mathcal Cl(\bv,Q)$.
Then the elements $I_i$ satisfy the relation
$$ I_j I_k + I_k I_j = -2 B_{jk} 1,$$
where $1$ is the unity in the Clifford algebra and $B$ is a bilinear form
obtained from $Q$ by polarization. In a quadratic finite dimensional real vector space it is always
possible to choose a basis $e_i$ for which the matrix of the bilinear form $B$
has the form
$$\begin{pmatrix}
   O_r & & \\
& E_s & \\  & & -E_t
  \end{pmatrix},\ r+s+t=n,
$$
where $E_k$ denotes the $k \times k$ identity matrix and $O_k$
the $k \times k$ zero matrix. Let us restrict to the case
$r=0$, whence $B$ is nondegenerate. Then $B$ defines inner product of signature
$(s,t)$ and we call the corresponding Clifford algebra $\mathcal Cl(s,t)$.
For example, $\mathcal Cl(0,2)$ is generated by $I_1,I_2$, satisfying
$I_1^2=I_2^2=-E$ with $I_1 I_2 = -I_2 I_1$, i.e.
$\mathcal Cl(0,2) $ is isomorphic to $\bh$.

Following the classification of the Clifford algebra, Boot periodicity reads that
$\cc l(0,n) \cong \cc l(0,q) \otimes \brr (16p)$, where $n=8p+q$, $q=0,\dots,7$ and
$\brr (N)$ denotes the $N \times N$ matrices with coefficients in $\brr$. To determine
explicit matrix representations we use the periodicity conditions
\begin{align*}
 \cc l(0,n) &\cong \cc l(n-2,0) \otimes \cc l(0,2),\\
\cc l(n,0) &\cong \cc l(0,n-2) \otimes \cc l(2,0),\\
 \cc l(s,t) &\cong  \cc l(s-1,t-1)\otimes \cc l(1,1)
\end{align*}
together with the explicit matrix representations of $\cc l(0,2)$, $\cc l(2,0)$, $\cc l(1,0)$ and
$\cc l (0,1)$. More precisely, the identification of Clifford algebra $\cc l(s,t),$ where either $s$ or $t$ is greater than 2, can be obtained by one of the following possibilities:
\begin{itemize}
\item[a)]
If $s>t$ then
$$\cc l(s,t)\cong\cc l(s-t,0)\otimes\overset{t}{\otimes}\cc l(1,1).$$
Further, to classify the algebra $\cc l(s-t,0)$,
% the following two cases are possible:
% \begin{itemize}
% \item[a1)] If $s-t$ is even then
% $$\cc l(s-t,0)\cong\cc l(0,2)\otimes\overset{\frac{s-t}{2}-1}{\otimes}\cc l(2,0).$$
% \item[a2)] If $s-t$ is odd then
% $$\cc l(s-t,0)\cong\cc l(0,1)\otimes\overset{\frac{s-t-1}{2}}{\otimes}\cc l(2,0).$$
% \end{itemize}
% \item[b)]
% If $s<t$ then
% $$\cc l(s,t)\cong\cc l(0,t-s)\otimes\overset{s}{\otimes}\cc l(1,1).$$
% To classify the algebra $\cc l(0,t-s)$, the following two cases are possible:
% \begin{itemize}
% \item[b1)]
% If $t-s$ is even then
% $$\cc l(0,t-s)\cong\cc l(2,0)\otimes\overset{\frac{t-s}{2}-1}{\otimes}\cc l(0,2).$$
% \item[b2)]
% If $t-s$ is odd then
% $$\cc l(0,t-s)\cong\cc l(1,0)\otimes\overset{\frac{t-s-1}{2}}{\otimes}\cc l(0,2).$$
% \end{itemize}
% \item[c)]
% If $s=t$ then
% $$\cc l(s,t)\cong\overset{s}{\otimes}\cc l(1,1).$$
% \end{itemize}
the following four cases are possible:
\begin{align*}
\cc l(4p,0)  &\cong \overset{p}{\otimes}(\cc l(0,2) \otimes \cc l(2,0)),\\
\cc l(4p+1,0)&\cong \cc l(1,0)\otimes\overset{p}{\otimes}(\cc l(0,2) \otimes \cc l(2,0)),\\
\cc l(4p+2,0)&\cong\cc l(2,0)\otimes\overset{p}{\otimes}(\cc l(0,2) \otimes \cc l(2,0)),\\
\cc l(4p+3,0)&\cong\cc l(0,1)\otimes \cc l(2,0)\otimes\overset{p}{\otimes}(\cc l(0,2) \otimes \cc l(2,0)).
\end{align*}
\item[b)]
If $s<t$ then
$$\cc l(s,t)\cong\cc l(0,t-s)\otimes\overset{s}{\otimes}\cc l(1,1).$$
To classify the algebra $\cc l(0,t-s)$,
the following four cases are possible:
\begin{align*}
\cc l(0,4p)  &\cong \overset{p}{\otimes}(\cc l(2,0) \otimes \cc l(0,2)),\\
\cc l(0,4p+1)&\cong\cc l(0,1)\otimes\overset{p}{\otimes}(\cc l(2,0) \otimes \cc l(0,2)),\\
\cc l(0,4p+2)&\cong\cc l(0,2)\otimes\overset{p}{\otimes}(\cc l(2,0) \otimes \cc l(0,2)),\\
\cc l(0,4p+3)&\cong\cc l(1,0)\otimes \cc l(0,2)\otimes\overset{p}{\otimes}(\cc l(2,0) \otimes \cc l(0,2)).
\end{align*}
\item[c)]
If $s=t$ then
$$\cc l(s,t)\cong\overset{s}{\otimes}\cc l(1,1).$$
\end{itemize}
For example
$$ \cc l(3,0) \cong \cc l(0,1) \otimes \cc l(2,0),$$
where the matrix representation of $\cc l (0,1)$ is given by the matrices
$$\begin{pmatrix}
   1&0 \\ 0&1
  \end{pmatrix} \text{ and }
\begin{pmatrix}
   \phantom{-}0&1 \\ -1&0
  \end{pmatrix}
$$
and the matrix representation of $\cc l(2,0)$ is given by the matrices  \Small
$$E,
%E =
%\begin{pmatrix}
%1 & 0 & 0 & 0\\
%0 & 1 & 0 & 0\\
%0 & 0 & 1 & 0\\
%0 & 0 & 0 & 1\\
%\end{pmatrix},
I_1 =
\begin{pmatrix}
\phantom{-}0& -1 & \phantom{-}0 & \phantom{-}0\\
-1 & \phantom{-}0 & \phantom{-}0 & \phantom{-}0\\
\phantom{-}0 & \phantom{-}0 & \phantom{-}0 & \phantom{-}1\\
\phantom{-}0 & \phantom{-}0 & \phantom{-}1 & \phantom{-}0\\
\end{pmatrix},
I_2 =
\begin{pmatrix}
0 & 0 & 1 & 0\\
0 & 0 & 0 & 1\\
1 & 0 & 0 & 0\\
0 & 1& 0 & 0\\
\end{pmatrix},$$
$$I_3=I_1I_2 =
\begin{pmatrix}
\phantom{-}0& \phantom{-}0 & \phantom{-}0 &-1\\
\phantom{-}0 & \phantom{-}0 & -1 & \phantom{-}0\\
\phantom{-}0 & \phantom{-}1 & \phantom{-}0 & \phantom{-}0\\
\phantom{-}1 & \phantom{-}0 & \phantom{-}0 & \phantom{-}0\\
\end{pmatrix},
$$ \normalsize
where $E$ is an identity matrix.
Now, the matrix representation of $\cc l(3,0)$ is given by
$$
\begin{pmatrix}
E & 0 \\ 0 &E
   \end{pmatrix},
\begin{pmatrix}
I_1 & 0 \\ 0 &I_1
   \end{pmatrix},
\begin{pmatrix}
I_2 & 0 \\ 0 &I_2
   \end{pmatrix},
\begin{pmatrix}
I_3 & 0 \\ 0 &I_3
   \end{pmatrix},
$$
$$
\begin{pmatrix}
0&E  \\ -E & 0
   \end{pmatrix},
\begin{pmatrix}
0&I_1  \\ -I_1 & 0
   \end{pmatrix},
\begin{pmatrix}
0& I_2  \\ -I_2 & 0
   \end{pmatrix},
\begin{pmatrix}
0& I_3 \\ -I_3  & 0
   \end{pmatrix},
$$
for explicit description see \cite{hv}.

We now focus on the algebra $\mathcal O:=\mathcal Cl(s,t)$,
i.e. the algebra generated by
complex unities $I_i,  i=1, \dots ,t$
and  product unites $J_j,\  j= 1, \dots ,s $, which are anti commuting,
 i.e. $I_i^2=-E$, $J_j^2=E$ and  $K_i K_j = -K_j K_i$, $i \neq j,$ where $K\in \{I_i,J_j \}$.
On the other hand, this algebra is  generated by elements
$F_i$, $i=1,\dots ,k$ as a vector space.
We chose a basis $F_i$, $i=1,\dots ,k$, such that
$F_1=E$, $F_i=I_{i-1}$ for $i=2,\dots,t+1$,
$F_j=J_{j-t-1}$ for $j=t+2,\dots,s+t+1$ and by all different multiples of $I_i$ and $J_j$ of length $2,...,s+t$. Let us note that both complex and product unities can be found among these multiple generators.

\begin{lemma} \label{l1} Let $F_1,\dots ,F_k$ denote the $k=2^{s+t}$ 
elements of the matrix representation of Clifford algebra $\cc l(s,t)$
 on $\mathbb R^{k}$. Then there exists a real
vector $X \in \mathbb R^k$ such that the dimension of a linear span
$\langle F_i X | i=1,\dots,k\rangle$ equals to $k$.
\end{lemma}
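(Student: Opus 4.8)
The plan is to reduce the statement to the existence of a \emph{cyclic vector} and then to propagate this existence through the tensor-product decompositions a), b), c) recalled above. First I would observe that the conclusion is a property of the representation alone, not of the particular basis $F_1,\dots,F_k$: the assignment $a \mapsto a X$ is a linear map from $\cc l(s,t)$ to $\mathbb R^k$, its image is the span $\langle F_i X\mid i=1,\dots,k\rangle$ for any basis $\{F_i\}$, and since $\dim \cc l(s,t) = k = \dim \mathbb R^k$ the span equals $\mathbb R^k$ exactly when this map is a linear isomorphism. Thus the lemma is equivalent to the assertion that for some $X$ the orbit map $a\mapsto aX$ is bijective, i.e. $\mathbb R^k$ is a free module of rank one over $\cc l(s,t)$ with generator $X$; in particular the existence of a suitable $X$ is independent of which basis of the algebra is chosen.

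The engine of the proof is a tensor-product lemma. Suppose algebras $A$, $B$ act on $\mathbb R^{k_A}$, $\mathbb R^{k_B}$ and that each admits such a cyclic vector, say $X_A$ and $X_B$. The representation of $A\otimes B$ used in the classification is the Kronecker product, acting on $\mathbb R^{k_A}\otimes \mathbb R^{k_B}=\mathbb R^{k_A k_B}$ by $(a\otimes b)(Y\otimes Z)=(aY)\otimes(bZ)$. Taking $X:=X_A\otimes X_B$ one gets
\[
(a\otimes b)X=(aX_A)\otimes(bX_B),
\]
and as $a$ runs over a basis of $A$ and $b$ over a basis of $B$ the vectors $aX_A$ form a basis of $\mathbb R^{k_A}$ and the $bX_B$ a basis of $\mathbb R^{k_B}$, whence their tensor products form a basis of $\mathbb R^{k_A k_B}$. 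So the cyclic-vector property is closed under the tensor products appearing in a), b), c).

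It then remains to verify the base cases $\cc l(1,0)$, $\cc l(0,1)$, $\cc l(2,0)$, $\cc l(0,2)$ and $\cc l(1,1)$ from their explicit matrix representations; for each one exhibits $X$ directly (for instance $X=e_1$ works for $\cc l(2,0)$ with the matrices displayed above, and $X=(1,0)^{\!\top}$ for $\cc l(0,1)$), these being precisely the regular representations and hence free of rank one. Combining the base cases with the tensor-product lemma and the inductive decompositions a), b), c) produces a cyclic vector, and therefore a vector $X$ with $\langle F_i X\mid i=1,\dots,k\rangle=\mathbb R^k$, for every $\cc l(s,t)$. The one point demanding care is the matching between the paper's basis of multiple generators and the Kronecker basis $\{F^A_i\otimes F^B_j\}$ under the algebra isomorphism $\cc l(s,t)\cong A\otimes B$; this is exactly where the basis-independent reformulation of the first step does the work, since it lets us check spanning for the convenient Kronecker basis and conclude it for the prescribed one.
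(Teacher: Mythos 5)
Your proof is correct, and it follows the same skeleton as the paper's --- induction over the Bott periodicity decompositions a), b), c) with the base cases $\cc l(1,0)$, $\cc l(0,1)$, $\cc l(2,0)$, $\cc l(0,2)$, $\cc l(1,1)$ checked from the explicit matrices --- but the invariant you carry through the induction is genuinely different. The paper tracks a combinatorial property of the generic element: $F=\sum_i a_iF_i$ is a generalized permutation matrix with exactly one entry $\pm a_i$ in each row and column, so that any column $Fe_j$ lists all the coefficients up to sign and vanishes only if every $a_i=0$; the Kronecker product step is then verified by inspecting the block pattern $\bigl(g_{\sigma_r(c)}H\bigr)$. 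You instead track the module-theoretic statement that $\mathbb R^k$ is free of rank one over the algebra, and your key lemma is that a tensor product of cyclic vectors is cyclic for the Kronecker product representation. Your version buys two things: it is more general (it needs nothing about the special permutation-like shape of the matrices, only that each factor admits a cyclic vector), and your opening observation that the span $\langle F_iX\rangle$ is the image of $a\mapsto aX$ and hence independent of the chosen basis of $\cc l(s,t)$ cleanly disposes of a point the paper handles only with a ``without loss of generality'' and a tacit identification of its basis of iterated products with the Kronecker basis. What the paper's version buys is an explicit description of the matrix $F$ (used nowhere else, but concrete), and the immediate identification of $X$ as any standard basis vector $e_j$; your recursive $X_A\otimes X_B$ reduces to the same thing once the base-case vectors are chosen to be standard basis vectors. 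One small caveat: your parenthetical that these representations are ``precisely the regular representations'' is not needed and is slightly delicate for the non-simple cases, but since you verify the base cases directly from the matrices, nothing rests on it.
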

\begin{proof}
Let us suppose, without loss of generality, that $F_1,\dots ,F_k$ are the elements constructed
by means of Boot periodicity as above.
Then, by induction we prove that the matrix
$F= \sum_{i=1}^k a_i F_i, \ a_i\in\brr,$ is a square matrix that has exactly one entry $a_i$ in each
column and each row. For $\cc l(1,0)$ and $\cc l(0,1)$ we have
$$F=\begin{pmatrix}
a_0 & a_1 \\
a_1 & a_0
  \end{pmatrix},
F=\begin{pmatrix}
a_0 & a_1 \\
-a_1 & a_0
  \end{pmatrix},
$$
respectively. For the rest of the generating cases $\cc l (2,0)$, $\cc l (0,2)$ and $\cc l (1,1)$ the matrix $F$ can be obtained in very similar way, e.g. for $\cc l (2,0)$ we have
$$F=\begin{pmatrix}
a_0 & -a_1 & a_2 & -a_3\\
-a_1 & a_0 & -a_3 & a_2\\
a_2 & a_3 & a_0 & a_1\\
a_3 & a_2 & a_1 & a_0
  \end{pmatrix}.$$

We now restrict to Clifford algebras of type $ \cc l(s,0),\ s>2,$ and show the induction step by means of the periodicity condition
$$ \cc l(s,0) \cong \cc l(0,s-2)\otimes \cc l(2,0) .$$
The rest of the cases according to the Clifford algebra identification above can be proved similarly and we leave it to the reader.
Let $G_i$, $i=1,\dots,l$ denote the $l$ elements of the matrix representation of Clifford algebra $\cc l(0,s-2)$ with the
required property, i.e. the matrix
$G= \sum_{i=1}^l g_i G_i$ is a square matrix with exactly one entry $g_i$ in each
column and each row, i.e.
$$ G:=
\begin{pmatrix}
g_{\sigma_1(1)} & \dots & g_{\sigma_1(l)}\\
\vdots & & \vdots \\
g_{\sigma_l(1)} & \dots & g_{\sigma_l(l)}
\end{pmatrix},
$$
where $\sigma_i$ are all perumtations of $\{1,\dots,l\}$.
The matrix of $\cc l(2,0)$ is
$$H:=\begin{pmatrix}
a_1 & -a_2 & a_3 & -a_4\\
-a_2 & a_1 & -a_4 & a_3\\
a_3 & a_4 & a_1 & a_2\\
a_4 & a_3 & a_2 & a_1
  \end{pmatrix}.$$
Then the matrix for the representation
 of Clifford algebra
 $\cc l(s,0)$ is composed as follows:
$$ F:=
\begin{pmatrix}
g_{\sigma_1(1)}H & \dots & g_{\sigma_1(l)}H\\
\vdots & & \vdots \\
g_{\sigma_l(1)}H & \dots & g_{\sigma_l(l)}H
\end{pmatrix}.
$$

%
%  Then the matrices $F_i, \ i=1,...,k$ for the representation of Clifford algebra
% $\cc l(s,0)$ are composed of the following four families:
% $$
% \begin{pmatrix}
% G & 0 & 0 & 0\\
% 0 & G & 0 & 0\\
% 0 & 0 & G & 0\\
% 0 & 0 & 0 & G\\
% \end{pmatrix},
% \begin{pmatrix}
% \phantom{-}0& -G & \phantom{-}0 & \phantom{-}0\\
% -G & \phantom{-}0 & \phantom{-}0 & \phantom{-}0\\
% \phantom{-}0 & \phantom{-}0 & \phantom{-}0 & \phantom{-}G\\
% \phantom{-}0 & \phantom{-}0 & \phantom{-}G & \phantom{-}0\\
% \end{pmatrix},
% \begin{pmatrix}
% 0 & 0 & G & 0\\
% 0 & 0 & 0 & G\\
% G & 0 & 0 & 0\\
% 0 & G & 0 & 0\\
% \end{pmatrix}, $$
% $$
% \begin{pmatrix}
% \phantom{-}0& \phantom{-}0 & \phantom{-}0 &-G\\
% \phantom{-}0 & \phantom{-}0 & -G & \phantom{-}0\\
% \phantom{-}0 & \phantom{-}G & \phantom{-}0 & \phantom{-}0\\
% \phantom{-}G & \phantom{-}0 & \phantom{-}0 & \phantom{-}0\\
% \end{pmatrix},
% $$ \normalsize
% % $$
% % F_i^1=\begin{pmatrix}
% %  G_i & 0 \\
% % 0 & G_i
% %   \end{pmatrix},
% % F_i^2=\begin{pmatrix}
% % 0 & G_i \\
% % -G_i & 0
% %   \end{pmatrix}
% % $$
% and the matrix
% $$F=\begin{pmatrix}
% a_1G & -a_2G & a_3G & -a_4G\\
% -a_2G & a_1G & -a_4G & a_3G\\
% a_3G & a_4G & a_1G & a_2G\\
% a_4G & a_3G & a_2G & a_1G
%   \end{pmatrix}.$$
Finally, if the matrix $G$ has exactly one $g_i$ in each column and each row, the
matrix $F$ is a square matrix with exactly one $a_jg_i$, where $j=1,\dots,4,i=1,\dots ,l$ in each column and each row.

Now, let $F= \sum_{i=1}^k b_i F_i$ be a $k \times k$ matrix constructed as above and let $e_i$ denote the standard basis of $\mathbb R^{k}$. Then the vector
$$v_i:=Fe_i^T$$
is the $i$-th column of the matrix $F$ and thus it is composed of $k$ different entries $b_i$.
If  the dimension of $\langle F_i X | i=1\dots,k\rangle$ was less then $k$, then the vector $v$
has to be zero and thus all $b_i$ have to be zero.
\end{proof}

\begin{defin} \label{g2}
Let $P^1 M$ be a bundle of linear frames
over $M$ (the fiber bundle $P^1 M$ is a principal bundle over $M$
with the structure group $GL(n, \mathbb{R})$). Reduction of the bundle $P^1 M$
to the subgroup $G \subset GL(n, \mathbb{R})$ is called a
{\em $G$-structure}\index{$G$-structure}.
\end{defin}

\begin{defin}
If $M$ is an $km$--dimensional manifold, where $k=2^{s+t}$ and
$m \in \mathbb N$ then an almost Clifford manifold
is given by a reduction of the structure group
$GL(km, \mathbb R)$ of the principal frame bundle of $M$
to
$$GL(m, {\mathcal O}) = \{ A \in GL(km, \mathbb R) | A I_i = I_i A , A J_j = J_j A \} ,$$
where $\mathcal O$ is an arbitrary Cliffford algebra.
\end{defin}

In other words, an almost Clifford manifold 
is a smooth manifold equipped with the set of anti commuting affinors
 $I_i,  i=1, \dots ,t,\ I_i^2=-E$ and $J_j,\  j= 1, \dots ,s,\ J_j^2=E $ such that
the free associative unitary algebra generated by
$\langle I_i,J_j,E \rangle$ is isomorphically equivalent to $\mathcal O$.
In particular, on the elements of this reduced bundle one can define affinors in the form of
$F_1, \dots, F_{k}$ globally.

\section{$A$-planar curves and morphisms}

The concept of planar curves is a generalization of a geodesic on
a smooth manifold equipped with certain structure.
In  \cite{ms} authors proved a set of facts about structures
based on two different affinors. Following \cite{hs06, h11}, a manifold equipped with an
affine connection and a set of affinors $A=\{ F_1 , \dots, F_l\}$
is called an $A$-structure and a curve satisfying $\nabla_{\dot{c}}
\dot{c} \in \langle F_1(\dot{c}) , \dots, F_l(\dot{c}) \rangle$ is called an
$A$-planar curve.

\begin{defin}
Let $M$ be a smooth manifold such that $\dim (M) = m.$ Let $A$ be a
smooth $\ell$-dimensional $(\ell<m)$ vector subbundle in $ T^* M \otimes
TM$ such that the identity affinor $E=id_{TM}$ restricted to
$T_x M$ belongs to\\ $A_x M \subset T_x^* M \otimes T_xM $ at each
point $x \in M.$ We say that $M$ is equipped with an
$\ell$-dimensional {\em $A$--structure.}
\end{defin}

It is easy to see that an almost Clifford structure is not an $A$--structure, 
because the affinors in the form of $F_0, \dots, F_{\ell} \in A $
have to be defined only locally. 
\begin{defin}
The $A$--structure where $A$ is a Clifford
algebra $\mathcal O$ is called an almost Cliffordian manifold.
\end{defin}
% It is easy to see that an almost Cliffordian structure is an $A$--structure, where
% $A$ is a Clifford algebra $\mathcal O$ because
% the affinors in the form of $F_0, \dots, F_{\ell} \in A $
% can by defined only locally, too.
Classical concept of $F$--planar curves defines
the {\em $F$-planar} \index{$F$-planar} curve
as the curve
$c : \mathbb R \rightarrow M$ satisfying the condition
$$ \nabla_{\dot{c}} \dot{c} \in \langle \dot{c} , F(\dot{c}) \rangle,$$
where $F$ is an arbitrary affinor.
Clearly, geodesics are $F$-planar curves for all affinors,
because $\nabla_{\dot{c}} {\dot{c}} \in \langle \dot{c} \rangle
\subset \langle \dot{c},F(\dot{c}) \rangle.$

Now, for any tangent vector $X \in T_x M$ we shall write $A_x(X)$ for
the vector subspace
$$ A_x (X) = \lbrace F_i(X) | F_i \in A_x M \rbrace \subset T_x M$$
and call it the {\em $A$--hull of the vector $X$}. Similarly,
{\em $A$--hull of a vector field} is a subbundle in $TM$ obtained pointwise.
For example, $A$--hull of an almost quaternionic structure is
$$ A_x (X) = \lbrace aX+bI(X)+cJ(X)+dK(X) | a,b,c,d \in \mathbb R \rbrace.$$

\begin{defin}
Let $M$ be a smooth manifold equipped with an $A$--structure and a linear connection $\nabla$.
A smooth curve  $c : \mathbb R \to M$  is said to be {\em $A$--planar} if $$\nabla_{\dot{c}} {\dot{c}} \in A(\dot{c}).$$
\end{defin}

% \begin{defin}
% Let $M$ be a smooth manifold equipped with an $A$--structure, such that
% $M$ is an $G$--structure with some structure group $G$.
% A linear connection $\nabla$ is called $A$--connection if the following properties holds
% \begin{itemize}
%  \item $\nabla$ is an $\mathcal D$--connection with respect to $G$--structure
%  \item $\nabla$ preserves $A$--planar curves
% \end{itemize}
%  \end{defin}
% \begin{defin}
% Let $M$ be a smooth manifold equipped with an $A$--structure, such that
% $M$ is an $G$--structure with some structure group $G$.
% A linear connection $\nabla$ is called $A$--connection if the following properties holds
% \begin{itemize}
%  \item $\nabla$ is an $\mathcal D$--connection with respect to $G$--structure
%  \item $\nabla$ preserves $A$--planar curves
% \end{itemize}
%  \end{defin}

One can easily check that the class of connections
\begin{equation}
 [\nabla]_{A}  = \nabla + \sum_{i=1}^{\dim A} \Upsilon_i \otimes F_i,
 \end{equation} 
where $\Upsilon_i$ are one forms on $M$, 
share the same class of $A$--planar curves, but we have to describe them more carefully for Cliffordian
manifolds.

\begin{thm}
Let $M$ be a smooth manifold equipped with an almost  Cliffordian structure, i.e. an $A$--structure,
where $A=\cc l(s,t),$ $\dim (M) \geq 2(s+t) $,
 and let $\nabla$ be a linear connection such that $\nabla A =0$.
The class of  connections $[\nabla]$ preserving $A$, sharing the same torsion and $A$--planar curves
is isomorphic to $T^*M$ and the isomorphism has the following form:
\begin{equation} \label{aconn}
 \Upsilon \mapsto  \nabla + \sum_{i=1}^{k}\epsilon_i (\Upsilon \circ F_i) \odot F_i,
 \end{equation}
where $\langle F_1, \dots, F_k \rangle = A$, $k=2^{s+t},$ as a vector space, $\epsilon_i\in \{\pm 1\}$ and $\Upsilon$ is a one form on $M$.
\end{thm}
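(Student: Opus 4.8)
The plan is to present the class as an affine space and pin down the vector space it is modelled on. Write a competing connection as $\nabla' = \nabla + D$ with $D$ a $(1,2)$-tensor. Equality of torsion is equivalent to $D$ being symmetric, $D(X,Y)=D(Y,X)$; sharing $A$-planar curves is equivalent to $D(X,X)\in A(X)$ for every $X$, since $\nabla'_{\dot c}\dot c = \nabla_{\dot c}\dot c + D(\dot c,\dot c)$ and every tangent vector occurs as some $\dot c$; and preserving $A$ is equivalent, using $\nabla A=0$ and $\nabla'_X F = \nabla_X F + [D_X,F]$ for a section $F$ of $A$, to $[D_X,F]\in A$ for all $X$ and all $F\in A$, i.e. to each endomorphism $D_X$ normalising $A$. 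Thus the theorem reduces to showing that $\Upsilon \mapsto D_\Upsilon := \sum_{i=1}^k \epsilon_i (\Upsilon\circ F_i)\odot F_i$ is a linear isomorphism from $T^*M$ onto the space $W$ of symmetric, $A(X)$-valued-on-the-diagonal, $A$-normalising tensors.

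First I would check that every $D_\Upsilon$ lies in $W$. Symmetry is built into $\odot$, and $D_\Upsilon(X,X)=\sum_i\epsilon_i\Upsilon(F_iX)\,F_i(X)\in A(X)$ is immediate, so the torsion and planar-curve requirements are free. The normalising property is the delicate point, and I expect it to be the main obstacle. Here one fixes the signs by $F_i^2=\epsilon_i E$, so that $\{F_i\}$ is orthogonal for the trace form $\langle a,b\rangle = \tfrac1k\,\mathrm{tr}(ab)$ with $\langle F_i,F_i\rangle=\epsilon_i$ and $C:=\sum_i\epsilon_i F_i\otimes F_i$ is the associated Casimir element. Splitting $D_X = \tfrac12\sum_i\epsilon_i\Upsilon(F_iX)F_i + \tfrac12\sum_i\epsilon_i (F_iX)\otimes(\Upsilon\circ F_i)$, the first summand lies in $A$ and hence automatically normalises $A$; the bracket of the remaining rank-one part with a generator $F_j$ has to be rewritten by the Clifford multiplication table, and the point is that the $\mathrm{ad}$-invariance of $C$ (which holds precisely for the above signs) makes this bracket land back in $A$. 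This is exactly where the choice $\epsilon_i\in\{\pm1\}$ is forced.

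Injectivity is where Lemma \ref{l1} enters. If $D_\Upsilon=0$ then on the diagonal $\sum_i\epsilon_i\Upsilon(F_iX)\,F_i(X)=0$ for all $X$. Evaluating at the vector $X_0$ supplied by Lemma \ref{l1}, the vectors $F_1(X_0),\dots,F_k(X_0)$ are linearly independent, so every coefficient $\epsilon_i\Upsilon(F_iX_0)$ vanishes; taking $i=1$ and $F_1=E$ gives $\Upsilon(X_0)=0$. Since the set of vectors with the spanning property of Lemma \ref{l1} is open and dense, the linear form $\Upsilon$ vanishes identically.

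Finally, for surjectivity I would argue by dimension. The normalising condition says $D_X$ lies in the Lie algebra $\mathfrak{g}=\mathfrak{gl}(m,\mathcal O)\oplus A$ of the Cliffordian structure group, so $W$ is the part of the first prolongation $\mathfrak{g}^{(1)}$ singled out by $D(X,X)\in A(X)$; reading off the coefficients of $D(X_0,X_0)=\sum_i c_i F_i(X_0)$ at the generic vector and using the normalising property to propagate them recovers a single one-form $\Upsilon$ with $D=D_\Upsilon$. Equivalently, one shows $\dim W=\dim T^*M=\dim M$, which together with the injectivity above forces $D_\Upsilon$ to exhaust $W$. The hypothesis $\dim M\ge 2(s+t)$ serves to guarantee that Lemma \ref{l1} applies and that this count is exact.
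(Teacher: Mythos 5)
Your reduction of the theorem to a statement about the difference tensor $D=\bar\nabla-\nabla$ is sound and matches the paper's starting point: symmetry of $D$ encodes equality of torsions, $D(X,X)\in A(X)$ encodes the common $A$--planar curves, and compatibility with $A$ constrains the algebraic type of $D_X$; your injectivity argument via Lemma \ref{l1} is also essentially the paper's (the linear independence of the $F_iX$ is invoked at exactly that point). The problem is that the two steps carrying the actual content of the theorem are asserted rather than proved. For surjectivity, the planarity condition only gives a priori $D(X,X)=\sum_i c_i(X)F_i(X)$ with $k$ independent coefficient functions $c_i$ that need not even be linear in $X$; the whole theorem is the claim that Clifford compatibility forces $c_i(X)=\epsilon_i\Upsilon(F_iX)$ for a \emph{single linear} one--form $\Upsilon$ and specific signs. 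Your phrase ``using the normalising property to propagate them recovers a single one-form'' is precisely the statement to be proved. The paper does this by induction over the generators of $\cc l(s,t)$, applying $P(SX,SX)=S^2P(X,X)$ for each generator $S$ to couple the coefficient functions in pairs and fix the signs $\epsilon_i$, and then runs a separate scaling argument ($P(sX,tY)=stP(X,Y)$, divide by $s$, let $s\to0$) to establish that $\Upsilon$ is additive --- a point your proposal does not address at all. Your fallback ``one shows $\dim W=\dim T^*M$'' is circular: given injectivity, that dimension count \emph{is} surjectivity, so it cannot be used to obtain it.

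The second gap is well-definedness. You correctly single out as ``the delicate point'' the fact that $D_\Upsilon$ is compatible with the Clifford structure, but the appeal to ad-invariance of the Casimir $C=\sum_i\epsilon_iF_i\otimes F_i$ is left as a heuristic. This is a genuine computation with the Clifford multiplication table --- the paper isolates it in Lemma \ref{koef} and the theorem following it, where the $\epsilon_i$ are defined recursively along the grading of $\cc l(s,t)$ and checked for consistency --- and it is exactly where the signs in \eqref{aconn} are pinned down; your rule $F_i^2=\epsilon_i E$ is a plausible closed form for that recursion but is nowhere verified. In short, the architecture of your proof is fine, and in places cleaner than the paper's, but as written it defers precisely the two computations that constitute the proof.
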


\begin{proof}
First, let us consider the difference tensor 
$$P(X,Y)=\bar{\nabla}_{X} (Y)
- {\nabla}_{X} (Y)  $$ 
and one can see that its value
is symmetric in each tangent space because both connections share the same torsion.
Since both $\nabla$ and $\bar\nabla$ preserve $F_i$, $i=1,\dots,k,$
the difference tensor $P$ is Clifford
linear in the second variable. By symmetry it is thus Clifford bilinear and we can procesed by induction.
Let $X=\dot{c}$ and the deformation $P(X,X)$ equals $\sum_{i=1}^{k} \Upsilon_i(X)F_i(X)$
because $c$ is $A$-planar with respect to $\nabla$ and $\bar \nabla$
In this case we shall verify 

First, for $s=1,t=0$
\begin{align*}
P(X,X)  &= a(X)X+b(JX)JX, \\
P(X,X)&=J^2P(X,X)=P(JX,JX)  = a(JX)JX+b(X)X.
\end{align*}
The difference of the first row and the
second rows implies $a(X)=b(X)$ and $a(JX)=b(JX)$
because we can suppose that  $X,JX$ are linearly independent.
For $s=0,t=1$
\begin{align*}
 P(X,X)  &= a(X)X+b(IX)IX, \\
-P(X,X)  &= I^2P(X,X)=P(IX,IX)  = a(IX)IX-b(X)X.
\end{align*}
The sum of the first row and the
second row implies $a(X)=b(X)$ and $a(IX)=-b(IX)$
because we can suppose that  $X,IX$ are linearly independent.

Let us suppose that the property holds for a Clifford algebra
$\cc l(s,t)$, $k=2^{s+t}$ i.e.
\begin{align*}
 P(X,X)  &= \sum_{i=1}^{k} \epsilon_i(\Upsilon( F_i(X))) F_i(X),
\end{align*}
where $\epsilon_i \in \{ \pm 1 \}$.

For $\cc l(s,t+1)$ we have
\begin{align*}
 P(X,X)  &= \sum_{i=1}^{k}\epsilon_i(\Upsilon( F_i(X))) F_i(X) 
 + \sum_{i=1}^{k} (\xi_i ( F_iS(X))) F_iS(X),
\end{align*} and
\begin{align*}
  S^2P(X,X)  &= \sum_{i=1}^{k} \epsilon_i(\Upsilon( F_i(SX))) F_i(SX) + 
\sum_{i=1}^{k} (\xi_i ( F_i(X))) F_i(X) ,
\end{align*}
The sum  of the first row and the
second rows  implies
$$ \epsilon_i\Upsilon( F_i(X))) = -\xi_i (F_i X) \text{ and } \epsilon_i\Upsilon( F_i(S X))) = -\xi_i (F_i S X), $$
because we can suppose that   $F_i X$ are linearly independent. 
The case of $\cc l(s+1,t)$ is calculated in the same way.

Now, $ P(X,X)  = \sum_{i=1}^{k} \epsilon_i(\Upsilon( F_i(X))) F_i(X) $ an one 
shall compute 
\begin{align*}
 P(X,Y) &= \frac{1}{2}(
 \sum_{i=1}^{k} \epsilon_i\Upsilon( F_i(X+Y)) F_i(X+Y) - 
\sum_{i=1}^{k}\epsilon_i \Upsilon ( F_i(X))  F_i(X) \\
&-
 \sum_{i=1}^{k} \epsilon_i\Upsilon( F_i(Y)) F_i(Y)).
\end{align*}
by polarization. 

Assuming that vectors $F_i(X),F_i(Y)$, $i=1,\dots, k$ are  linearly
independent we compare the coefficients of
$X$ in the expansions of $P(sX,tY)=stP(X,Y)$ as above to get
$$
s\Upsilon(sX+tY)-s\Upsilon(sX)=st(\Upsilon(X+Y)-\Upsilon(X)).
$$
Dividing by $s$ and, putting $t=1$ and taking the limit $s\to 0$, we conclude
that
$\Upsilon(X+Y)=\Upsilon(X)+\Upsilon(Y)$.

We have proved that the form $\Upsilon$ is linear in $X$ and 
$$ (X,Y) \to \sum_{i=1}^{k} \epsilon_i(\Upsilon( F_i(X))) F_i(Y) + \sum_{i=1}^{k}\epsilon_i
 (\Upsilon ( F_i(Y)))  F_i(X)$$
is a symmetric complex bilinear map which agrees with $P(X,Y)$ if both arguments coincide, it always agrees with $P$ by polarization and
$\bar{\nabla}$ lies in the  projective equivalence class $[\nabla]$.

% Let us suppose that the property holds for a Clifford algebra
% $\cc l(s,t)$, $k=2^{s+t}$ i.e.
% \begin{align*}
%  P(X,Y)  &= \sum_{i=1}^{k} \epsilon_i(\Upsilon( F_i(X))) F_i(Y) + \sum_{i=1}^{k}\epsilon_i
% (\Upsilon ( F_i(Y)))  F_i(X),
% \end{align*}
% For $\cc l(s,t+1)$ we have
% \begin{align*}
%  P(X,Y)  &= \sum_{i=1}^{k}\epsilon_i(\Upsilon( F_i(X))) F_i(Y) + \sum_{i=1}^{k}
% \epsilon_i(\Upsilon ( F_i(Y)))  F_i(X) \\
%  &+ \sum_{i=1}^{k} (\xi_i ( F_iS(X))) F_iS(Y) + \sum_{i=1}^{k}
% (\xi_i ( F_iS(Y)))  F_iS(X),
% \end{align*} and
% \begin{align*}
%   S^2P(X,Y)  &= \sum_{i=1}^{k} \epsilon_i(\Upsilon( F_i(SX))) F_i(SY) + \sum_{i=1}^{k}
% \epsilon_i(\Upsilon ( F_i(SY)))  F_i(SX) \\
%  &+ \sum_{i=1}^{k} (\xi_i ( F_i(X))) F_i(Y) + \sum_{i=1}^{k}
% (\xi_i ( F_i(Y)))  F_i(X),
% \end{align*}
% The sum  of the first row and the
% second rows  implies
% $$ \epsilon_i\Upsilon( F_i(X))) = -\xi_i (F_i X) \text{ and } \epsilon_i\Upsilon( F_i(S X))) = -\xi_i (F_i S X), $$
% because we can suppose that   $F_i X, F_i Y$ are linearly independent. The case of $\cc l(s+1,t)$ is calculated in the same way.
\end{proof}

% One can easily see that  a curve $c: \brr \to M$ is $A$--planar with respect to at least one
% connection $\tilde{\nabla} \in [\nabla]_A$ on $M$ if and only if $c: \brr \to M$ is a
% geodesic of some connection from $[\nabla]_A$. Moreover this happens if and only if
% $c$ is $A$--planar with respect to all connections from $[\nabla]_A$.
% Following \cite{h11},  if any geodesic of
% $\bar{\nabla}$ is $A$--planar for any $\tilde{\nabla} \in [\nabla]_A$,
% then $\bar{\nabla}$ lies in the projective equivalence class $[\nabla]_A$.

%
%  \begin{thm}
% Let $(M, A, [\nabla]_A)$, $(M',A',[\nabla]_{A'})$ be smooth manifolds of dimension
% $m$ equipped with projective A--structure and projective $A'$--structure of the same
% generic rank $\ell\le 2m$, where $A$, $A'$ are algebras. If $f:M\to M'$ is an $(A,A')$--planar mapping. Then $f$ is a morphism of the A--structures, i.e $f^*A' = A$.
% \end{thm}

% \begin{defin}
% Let $M$ be a smooth manifold equipped with an $A$--structure and
% a linear connection $\nabla$. Let  $\bar{M}$ be another manifold with a linear connection $\bar{\nabla}$ and  a
% $B$--structure. A diffeomorphism $ f: M \rightarrow \bar{M}$ is called an {\em $(A,B)$--planar} if each $A$--planar curve $c$ on $M$ is
% mapped by $f$ onto the $B$--planar curve $f_{\star} c$ on $M$.
% \end{defin}

  \section{$\mathcal D$--connections}
 Let  $\bv = \brr^n, $ $G \subset GL (\bv)
= GL(n, \brr)$ be a Lie group with Lie algebra $\fg$ and $M$ be a
smooth manifold of dimension $n.$

\begin{defin}
The {\em first prolongation
$\mathfrak{g}^{(1)}$} of $\mathfrak{g}$ is a space of symmetric
bilinear mappings $t: \mathbb{V} \times \mathbb{V} \rightarrow
\mathbb{V} $ such that, for each fixed $v_1 \in \mathbb{V},$ the
mapping $v \in \mathbb{V} \mapsto t(v,v_1) \in \mathbb{V}$ is in
$\mathfrak{g}.$
\end{defin}

\begin{example}\label{1}A {\em  complex structure} $(M,I), I^2=-E$, is a $G$-structure
where  $G=GL(n,\mathbb{C})$ with Lie algebra $\mathfrak{g} = \lbrace  A \in
\mathfrak{gl}(2n, \mathbb{R}) | AI=IA \rbrace$. The first
prolongation  $\mathfrak{g}^{(1)}$ is a space of symmetric
bilinear mappings $$ \mathfrak{g}^{(1)} =\{ t | t: \mathbb{V}
\times \mathbb{V} \rightarrow \mathbb{V}, t(IX,Y)=It(X,Y),
t(Y,X)=t(X,Y) \}.$$
On the other hand,  a {\em  product structure} $(M,P), P^2=E$ is a $G$-structure
where  $G=GL(n,\brr) \oplus GL(n,\brr) $ with Lie algebra $\mathfrak{g} =
\fg(n,\brr) \oplus \fg(n,\brr) $. The first
prolongation  $\mathfrak{g}^{(1)}$ is a space of symmetric
bilinear mappings $$ \mathfrak{g}^{(1)} =\{ t | t: \mathbb{V}_1\oplus \mathbb{V}_2
\times \mathbb{V}_1\oplus \mathbb{V}_2 \rightarrow \mathbb{V}_1\oplus \mathbb{V}_2,
t(\mathbb{V}_i,\mathbb{V}_i)\in \mathbb{V}_i,t(\mathbb{V}_2,\mathbb{V}_1) =0 \}.$$
\end{example}

\begin{lemma}
Let $M$ be a  $(km)$--dimensional Clifford manifold based on 
Clifford algebra $\mathcal O = \mathcal Cl(s,t),\ k=2^{s+t}$, 
$s+t>1$, $m\in \mathbb N,$  i.e. manifold equipped with $G$--structure, 
where
$$G= GL (m, \mathcal O) = \{ B \in GL(km, \brr) | B I_i = I_i B,  \: B J_j = J_j B\},$$
and  $I_i$ and $J_j$ are  algebra generators of $\mathcal O$.
Then the first prolongation $\fg^{(1)}$ of Lie algebra $\fg$ of Lie group $G$ vanishes.
\end{lemma}
\begin{proof}
Lie algebra $\mathfrak g$ of a Lie group $G$ is of the form
$$\fg=\mathfrak{gl}( m, \mathcal O ) = \{ B \in \mathfrak{gl}(km, \brr) | B I_i = I_i B, \: B J_j = J_j B  \},$$
where $I_i$ and $J_j$ are  generators of $\mathcal O$, i.e.
$K_{\bar i} K_{\bar j} = - K_{\bar j} K_{\bar i}$ for  $K_{\bar i}, K_{\bar j} \in \{ I_i,J_j \} $,
${\bar i} \neq {\bar j}$.
For $t \in \fg^{(1)}$ and $K_{\bar i} \neq K_{\bar j}$ we have equations
$$ t(K_{\bar i}X,K_{\bar j}X) = K_{\bar i} K_{\bar j} t(X,X), $$
$$ t(K_{\bar i}X,K_{\bar j}X) = t(K_{\bar j}X,K_{\bar i}X) = K_{\bar j} K_{\bar i} t(X,X) =
-K_{\bar i} K_{\bar j} t(X,X),$$ which lead to
$t(X,X)=0$. Finally from polarization
$$t(X,Y) =\frac{1}{2} (t(X+Y,X+Y) - t(X,X) -t(Y,Y))=0.\eqno\qedhere$$
\end{proof}
Let us shorty note, that the Example \ref{1} covers Clifford manifold for $\mathcal O = \mathcal Cl(0,1)$ and $\mathcal O = \mathcal Cl(1,0)$.
Next, suppose that there is a $G$-invariant complement $\mathcal{D}$ to
$\partial(\mathfrak{g} \otimes \mathbb{V}^{\ast})$ in $\mathbb{V}
\otimes \wedge^2 \mathbb{V}^{\ast}:$
$$ \mathbb{V} \otimes \wedge^2 \mathbb{V}^{\ast}
= \partial (\mathfrak{g} \otimes \mathbb{V}^{\ast}) \oplus
\mathcal{D}, $$
where
 $$ \partial :  \mathrm{Hom} (\mathbb{V}, \mathfrak{g}) = \mathfrak{g} \otimes \mathbb{V}^{\ast}
 \rightarrow \mathbb{V} \otimes \wedge^2 \mathbb{V}^{\ast}$$
  is the Spencer operator of alternation.

\begin{defin}
Let $\pi: P \rightarrow M $ be a $G$-structure. A connection
$\omega$ on $P$ is called a {\em $\mathcal{D} - connection$} if its torsion
function
$$ t^{\omega}  : P \rightarrow \mathbb{V} \otimes \wedge^2 \mathbb{V}^{\ast} = \partial (\mathfrak{g} \otimes \mathbb{V}^{\ast})
\oplus \mathcal{D} $$ has values in $\mathcal{D}.$
  \end{defin}

\begin{thm}{\cite{am}}
\begin{enumerate}
\item Any $G$-structure $\pi : P \rightarrow M$ admits a
$\mathcal{D}$-connection $\nabla.$ \item Let $\omega,
\bar{\omega},$ be two  $\mathcal{D}$-connections. Then the
corresponding  operators of covariant derivative $\nabla,
\bar{\nabla}$ are related by
$$ \bar{\nabla}= \nabla + S,$$
where $S$ is a tensor field such that for any $x \in M,\  S_x$
belongs to the first prolongation $\mathfrak{g}^{(1)}$ of the
Lie algebra $\mathfrak{g}.$
\end{enumerate}
\end{thm}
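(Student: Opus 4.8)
The plan is to treat the two assertions separately, using the affine structure of the space of principal connections on $P$ together with the $G$-invariant splitting $\bv \otimes \wedge^2 \bv^* = \partial(\fg \otimes \bv^*) \oplus \cd$. The single computational fact I would isolate first is the transformation law for the torsion: if $\nabla$ and $\bar\nabla = \nabla + S$ are two connections of the given $G$-structure, then $S$ is a tensor field whose value $S_x$ lies in $\fg \otimes \bv^* = \mathrm{Hom}(\bv,\fg)$ at each point (this is precisely the condition that both connections reduce to $P$), and the associated torsion functions satisfy $t^{\bar\omega} = t^{\omega} + \partial S$, where $\partial$ is the operator of alternation. This follows from $\bar{T}(X,Y) - T(X,Y) = S(X,Y) - S(Y,X)$; I would record it as a lemma, since both parts rest on it.

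For part (1) I would start from an arbitrary principal connection $\nabla_0$ on $P$ (such connections exist by a partition-of-unity argument) and consider its torsion function $t^{\omega_0} : P \to \bv \otimes \wedge^2 \bv^*$. Using the $G$-invariant decomposition, write $t^{\omega_0} = u + w$ with $u \in \cd$ and $w \in \partial(\fg \otimes \bv^*)$; because the splitting is $G$-invariant, both components are again smooth $G$-equivariant functions on $P$. Since $\partial$ maps $\fg \otimes \bv^*$ onto $\partial(\fg \otimes \bv^*)$, I would fix a $G$-equivariant linear right inverse $\sigma$ of $\partial$ and set $S = -\sigma \circ w$. Then $\nabla = \nabla_0 + S$ is still a connection of the $G$-structure, and by the transformation law its torsion is $t^{\omega_0} + \partial S = t^{\omega_0} - w = u \in \cd$; hence $\nabla$ is a $\cd$-connection.

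For part (2), let $\nabla, \bar\nabla$ be two $\cd$-connections and write $\bar\nabla = \nabla + S$ with $S_x \in \fg \otimes \bv^*$. The transformation law gives $\partial S = t^{\bar\omega} - t^{\omega}$. The right-hand side lies in $\cd$, being the difference of two elements of $\cd$, whereas the left-hand side lies in $\partial(\fg \otimes \bv^*)$ by construction; since the sum $\partial(\fg \otimes \bv^*) \oplus \cd$ is direct, $\partial S = 0$. It remains to identify the kernel: $\partial S = 0$ says exactly that, for each $x$, the bilinear map $S_x : \bv \times \bv \to \bv$ is symmetric, while its membership in $\fg \otimes \bv^*$ already means that $v \mapsto S_x(v,v_1)$ lies in $\fg$ for each fixed $v_1$. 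Comparing with the definition of the first prolongation, this is precisely $S_x \in \fg^{(1)}$.

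The routine ingredients here (existence of $\nabla_0$, the torsion transformation law, and the definitional identification $\ker\partial = \fg^{(1)}$) are standard. The one genuine obstacle is in part (1): producing the correction tensor $S$ as a global, smooth, $G$-equivariant object. This reduces to the existence of a $G$-invariant complement to $\fg^{(1)} = \ker\partial$ inside $\fg \otimes \bv^*$, equivalently a $G$-equivariant right inverse $\sigma$ of $\partial$. I would secure this either by invoking complete reducibility of the relevant $G$-modules in the cases at hand or by performing the correction $G$-equivariantly and checking that the resulting $S$ is smooth, and this is the step that deserves the most care.
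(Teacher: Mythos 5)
The paper does not prove this statement at all: it is quoted verbatim from Alekseevsky--Marchiafava \cite{am}, so there is no internal proof to compare against. Your argument is the standard one and is correct: the transformation law $t^{\bar\omega}=t^{\omega}+\partial S$ for $\bar\nabla=\nabla+S$ with $S_x\in\fg\otimes\bv^*$, plus directness of $\partial(\fg\otimes\bv^*)\oplus\cd$ and the identification $\ker\partial\cap(\fg\otimes\bv^*)=\fg^{(1)}$, gives part (2) exactly as you say. For the point you flag in part (1), you do not actually need a $G$-equivariant right inverse of $\partial$ (which would require complete reducibility, true for the groups in this paper but not needed in general): construct $\cd$-connections locally over trivializing charts by choosing any pointwise preimage under $\partial$, and then patch by a partition of unity, observing that a convex combination of $\cd$-connections is again a $\cd$-connection --- indeed by your own part (2) computation the differences of two local $\cd$-connections satisfy $\partial S_i=0$, so the torsion of the patched connection still lies in $\cd$. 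With that replacement your proof is complete.
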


\begin{defin}
We say that a connected linear Lie group $G$ with Lie algebra
$\mathfrak{g}$ is of type $k$ if its $k$-th prolongation vanishes,
i.e. $\mathfrak{g}^{(k)}=0$ and $\mathfrak{g}^{(k-1)} \neq 0.$ In
this sense, any $G$-structure with Lie group $G$ of type $k$ is called a  {\em G-structure of
type k.}  \end{defin}

\begin{thm}{\cite{am}}
Let $\pi: P \rightarrow M$ be a $G$-structure of type 1 and
suppose that there is given a $G$-equivariant decomposition
$$ \mathbb{V} \otimes \wedge^2 \mathbb{V} = \partial
 (\mathfrak{g} \otimes \mathbb{V}^*) \oplus \mathcal{D}.$$
Then there exists a unique connection, whose torsion tensor(calculated
with respect to a coframe $p \in P$) has values in $\mathcal{D}
\subset \mathbb{V} \otimes \wedge^2 \mathbb{V}^*.$
\end{thm}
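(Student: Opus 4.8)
The plan is to obtain this statement as an immediate corollary of the preceding theorem quoted from \cite{am}, the additional ingredient being only the type 1 hypothesis, which by the definition above means exactly $\fg^{(1)} = 0$. I would therefore not attempt any direct construction but instead extract both halves of the conclusion from the two parts of that theorem, reading ``connection whose torsion has values in $\cd$'' as a synonym for ``$\cd$-connection''.

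For existence, I would invoke part (1): every $G$-structure $\pi : P \to M$ admits a $\cd$-connection $\nabla$. By the definition of a $\cd$-connection its torsion function $t^\omega : P \to \bv \otimes \wedge^2 \bv^*$ takes values in the chosen $G$-invariant complement $\cd$, so such a $\nabla$ is already a connection of the required kind and existence is settled with no further work. Here the $G$-equivariance of the splitting $\bv \otimes \wedge^2 \bv^* = \partial(\fg \otimes \bv^*) \oplus \cd$ is what guarantees that the condition ``$t^\omega$ has values in $\cd$'' is well posed, i.e. independent of which coframe $p \in P$ is used; I would make sure to point this out when citing part (1).

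For uniqueness, I would take two such connections with covariant derivatives $\nabla$ and $\bar\nabla$ and apply part (2), which asserts that their difference is a tensor field $S$ with $S_x \in \fg^{(1)}$ for every $x \in M$. Since the structure is of type 1 we have $\fg^{(1)} = 0$, whence $S_x = 0$ for all $x$ and $\bar\nabla = \nabla$. This closes the argument. The proof is thus essentially one line once the quoted theorem is in hand; there is no genuine obstacle, and the whole force of the statement resides in the vanishing of the first prolongation, which collapses the $\fg^{(1)}$-parametrized family of $\cd$-connections to a single element.
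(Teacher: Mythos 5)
Your derivation is correct: existence follows from part (1) of the preceding quoted theorem, and uniqueness follows from part (2) together with $\mathfrak{g}^{(1)}=0$, which is exactly what the type~1 hypothesis supplies. Note that the paper itself offers no proof of this statement --- it is quoted from \cite{am} just like the theorem you build on --- so your one-line deduction is precisely the intended reading, and your remark that $G$-equivariance of the splitting makes the condition on the torsion function coframe-independent is a worthwhile point that the paper leaves implicit.
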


\begin{cor}
Let $M$ be a  smooth manifold equipped with a $G$--structure, where
$G= GL (n, \mathcal O)$, $ \mathcal O =\mathcal Cl(s,t)$, $s+t>1,$ i.e. an almost Clifford manifold. Then the
$G$--structure is of type 1 and  there exists a unique $\mathcal D$--connection.
\end{cor}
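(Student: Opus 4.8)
The proof is a synthesis of the Lemma above with the two theorems quoted from \cite{am}; no computation beyond what the Lemma already supplies is required. Here the structure group is $G=GL(n,\mathcal O)$ with Lie algebra $\fg=\mathfrak{gl}(n,\mathcal O)=\{B\in\mathfrak{gl}(kn,\brr)\mid B I_i=I_i B,\ B J_j=J_j B\}$, and $M$ has real dimension $kn$ with $k=2^{s+t}$; the preceding Lemma applies after the harmless relabelling $m=n$. First I would settle the type. The Lemma gives $\fg^{(1)}=0$ precisely under the standing hypothesis $s+t>1$. To see that the structure is of type $1$ and not of type $0$, it suffices to note $\fg^{(0)}=\fg\neq 0$: the real multiples of the identity endomorphism commute with every $I_i$ and $J_j$ and hence lie in $\fg$. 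By the definition of type, the $G$-structure is therefore of type $1$.

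Next I would produce the decomposition $\bv\otimes\wedge^2\bv^*=\partial(\fg\otimes\bv^*)\oplus\cd$. The existence of a $G$-invariant complement $\cd$ is the structure-theoretic fact already recalled in the Introduction; concretely it holds because $GL(n,\mathcal O)$ is reductive---being a product of general linear groups over the division algebras appearing in the classification of $\mathcal O$---so its linear action on $\bv\otimes\wedge^2\bv^*$ is completely reducible. With type $1$ established and this $G$-equivariant splitting in hand, the second theorem quoted from \cite{am} applies verbatim and furnishes a unique connection whose torsion (computed in a coframe $p\in P$) takes values in $\cd$, that is, a unique $\cd$-connection.

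Uniqueness can equally be read off from the first \cite{am} theorem: its part (1) gives existence of a $\cd$-connection, while part (2) says any two $\cd$-connections differ by a tensor $S$ with $S_x\in\fg^{(1)}=0$ at every $x\in M$, forcing $S=0$. The corollary is thus essentially formal, the whole of the geometric content having been absorbed into the Lemma. The only step meriting a moment's attention is the existence of the invariant complement $\cd$, which I would justify by the reductivity of $GL(n,\mathcal O)$; beyond that the argument is pure assembly of the preceding results.
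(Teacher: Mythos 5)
Your proposal is correct and follows essentially the same route the paper intends: the Lemma gives $\fg^{(1)}=0$ for $s+t>1$, hence type $1$, and the quoted theorems from \cite{am} then yield existence and uniqueness of the $\mathcal D$--connection. Your added justification of the invariant complement via reductivity of $GL(n,\mathcal O)$ merely fills in a fact the paper takes for granted from the structure theory, so there is no substantive difference in approach.
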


%
% Let $\mathcal O$ be a Clifford algebra $\cc l(s,t)$.
% Recall, that our algebra is generated by anti commuting complex unities $I_i,  i=1 \dots t$
% and  product unites $J_j,\  j= 1 \dots s $, i.e. elements
%  satisfying  $I_i^2=-E$, $J_j^2=E$ and  $K_i K_j = -K_j K_i$, $i \neq j$ and $K\in \{I_i,J_j \}$.
% On the other hand, this algebra is  generated by affinors $F_i$, $i=0,\dots k-1$ as a vector space.
% We chose basis $F_i$, $i=0,\dots k-1$, such that
% $F_0=E$, $F_i=I_i$ for $i=1,\dots,t$,
% $F_j=J_j$ for $j=t+1,\dots,s+t$ and by all different multiples of $I_i$ and $J_j$ of length $2,...,s+t$.

\section{An almost Clifordian manifold}

One can see that an almost Cliffordian manifold  $M$
is given as a $G$--structure provided that there is a reduction of the structure group of the principal frame bundle of $M$
to
$$G:=GL(m, \mathcal O) GL(1, \mathcal O)  =
GL(m, \mathcal O) \times_{Z(GL(1, \mathcal O))} GL(1, \mathcal O),$$
 %the set $\mathcal O^*$ is a group of invertible elements of $\mathcal O$ in the matrix form
%  and
where $Z(G)$ is a center of $G$.
The action of $G$ on $T_x M$ looks like
$$QXq, \text{ where } Q \in GL(m, \mathcal O), q \in GL(1, \mathcal O),$$
where the right action of $GL(1, \mathcal O)$ is blockwise.
In this case the tensor fields in the form $F_1, \dots, F_{k}$
can be defined only locally.  It is easy to see that the Lie algebra $\mathfrak{gl}(m, {\mathcal O})$ of a Lie group $GL(m, {\mathcal O})$ is of the form
$$\mathfrak{gl}(m, {\mathcal O}) = \{ A \in \mathfrak{gl}(km, \mathbb R) | A I_i = I_i A ,
A J_j = J_j A\} $$
and
the Lie algebra $\mathfrak{g}$ of a Lie group $GL(m, {\mathcal O})GL(1, \mathcal O)$ is of the form
$$\mathfrak{g} = \mathfrak{gl}(m, {\mathcal O}) \oplus 
\mathfrak{gl}(1, \mathcal O).$$
\noindent Let us note that the case of $\cc l(0,3)$ was studied in a detailed way in \cite{b2}.
\begin{remark} \label{lema1} Let $\mathcal O$ be the Clifford algebra $\cc l(0,2)$. For any one--form $\xi$ on $\mathbb V$ and any $X,Y \in \mathbb V$, the elements of the form
\begin{align*}
S^{\xi}(X,Y) &= - \xi(X)Y -\xi(Y)X
+ \xi(I_1X)I_1Y + \xi(I_1Y)I_1X
+ \xi(I_2X)I_2Y \\
&+ \xi(I_2Y)I_2X+ \xi(I_1I_2X)I_1I_2Y + \xi(I_1I_2Y)I_1I_2X
\end{align*}
belong to the
first prolongation $\fg^{(1)}$ of the Lie algebra $\fg$ of the
Lie group $GL (m, \mathcal O) GL(1, \mathcal O)$.
\end{remark}
\begin{proof}
We fix $X \in \mathbb V$ and define
$ S^{\xi}_X:= S^{\xi}(X,Y): \mathbb V \to \mathbb V$.
We have to prove that  $ S^{\xi}_X(I_i Y)= I_i S^{\xi}_X( Y) + \sum_{l=1}^{4} a_l F_l (Y)$,
for $i=1,2$ and $S^{\xi}_X( Y)=S^{\xi}_Y(X)$. We compute directly for any $X$ and for $I_1$
\begin{align*}
S_X^{\xi}(I_1Y)
&= - \xi(X)I_1Y - \xi(I_1Y)X  - \xi(I_1X)Y - \xi(Y)I_1X
- \xi(I_2X)I_1I_2Y \\
& - \xi(I_1I_2Y)I_2X + \xi(I_1I_2X)I_2Y + \xi(I_2Y)I_1I_2X \\
&=- \xi(I_1Y)X
- \xi(Y)I_1X
 - \xi(I_1I_2Y)I_2X\\
&+ \xi(I_2Y)I_1I_2X
+ \sum_{l=1}^{4} a_l F_l (Y).
\end{align*}\
On the other hand,
\begin{align*}
I_1 S_X^{\xi}(Y) &= - \xi(X)I_1Y - \xi(Y)I_1X
- \xi(I_1X)Y - \xi(I_1Y)X + \xi(I_2X)I_1I_2Y \\
&+ \xi(I_2Y)I_1I_2X - \xi(I_1I_2X)I_2Y - \xi(I_1I_2Y)I_2X\\
&=  - \xi(Y)I_1X  - \xi(I_1Y)X
 + \xi(I_2Y)I_1I_2X\\
&- \xi(I_1I_2Y)I_2X
+ \sum_{l=1}^{4} a_l F_l (Y)
\end{align*}
and
\begin{align*}
S_X^{\xi}(I_1Y)  - I_1 S_X^{\xi}(Y)&=\\
= &-\xi(I_1Y)X
- \xi(Y)I_1X
 - \xi(I_1I_2Y)I_2X
+ \xi(I_2Y)I_1I_2X\\
-&(  - \xi(Y)I_1X
 - \xi(I_1Y)X
 + \xi(I_2Y)I_1I_2X
 - \xi(I_1I_2Y)I_2X)\\
+ &\sum_{l=1}^{4} \bar{a}_l F_l (Y) =
\sum_{l=1}^{4} \bar{a}_l F_l (Y).
\end{align*}
By the same process for $I_2$ we obtain
\begin{align*}
S_X^{\xi}(I_2Y) - I_2S_X^{\xi}(Y)&=\\
= &- \xi(I_2Y)X
 + \xi(I_1I_2Y)I_1X
 - \xi(Y)I_2X
 - \xi(I_1Y)I_1I_2X\\
 -&( - \xi(Y)I_2X
- \xi(I_1Y)I_1I_2X
 - \xi(I_2Y)X + \xi(I_1I_2Y)I_1X)\\
 + &\sum_{l=1}^{4} \bar{a}_l F_l (Y) =
\sum_{l=1}^{4} \bar{a}_l F_l (Y).
\end{align*}
Finally, we have to prove the symmetry, but this is obvious.
\end{proof}

\begin{lemma} \label{koef}
Let $\cc l(s,t)$ be the Clifford algebra, $n=s+t,$ and let us denote by $F_i$ the affinors obtained from the generators of $\cc l(s,t).$ Then there exist $\varepsilon_i\in\{\pm 1\},\ i=1,...,n$ such that for $A\in V^\ast,$ the tensor $S^A\in V\times V\rightarrow V$ defined by
\begin{equation}\label{SAdef}
S^A(X,Y)=\sum_{i=1}^n \varepsilon_i A(F_iX)F_iY,\ X,Y\in V,
\end{equation}
satisfies the identity
\begin{equation}\label{SAident}
S^A(I_jX,Y)-I_jS^A(X,Y)=0
\end{equation}
for all algebra generators $I_j$ of $\cc l(s,t).$
\end{lemma}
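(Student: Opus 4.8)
The plan is to argue by induction on the number $n=s+t$ of generators, building $\cc l(s,t)$ up one generator at a time through the periodicity decompositions $\cc l(s,t+1)$ and $\cc l(s+1,t)$, in the same spirit as the proof of the Theorem above. I read $F_1,\dots,F_k$ as the full collection of $k=2^{s+t}$ basis affinors (all products of the generators, with $F_1=E$): these are the affinors obtained from the generators, and it is only for the whole basis, not for the $n$ generators alone, that the identity \eqref{SAident} can hold. Denote by $\eta_c\in\{\pm1\}$ the scalar with $I_c^2=\eta_c E$ for each generator $I_c$. Before the induction I would reduce \eqref{SAident} to an identity among the signs: by Lemma \ref{l1} there is an $X$ with $\{F_iX\}$ a basis of $V$, so the functionals $A\mapsto A(F_iX)$ are linearly independent, and a similar choice makes $\{F_iY\}$ independent; hence \eqref{SAident} holds for all $A,X,Y$ if and only if, in the expansion of $S^A(I_cX,Y)-I_cS^A(X,Y)$, the coefficient of each product $A(F_aX)F_bY$ vanishes.

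For the base cases $\cc l(1,0)$ and $\cc l(0,1)$ (basis $E,I_1$) a two-line computation gives $S^A(I_1X,Y)-I_1S^A(X,Y)=(\varepsilon_E-\eta_1\varepsilon_{I_1})[A(I_1X)Y-A(X)I_1Y]$, so the choice $\varepsilon_{I_1}=\eta_1\varepsilon_E$ works; normalising $\varepsilon_E=1$ this reads $\varepsilon_E=1,\ \varepsilon_{I_1}=\eta_1$.

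For the inductive step I would adjoin a new generator $I$ (with $I^2=\eta E$) and split the enlarged basis into the old affinors $F_i$ and the new affinors $F_iI$, writing
$$\widetilde S^A(X,Y)=\sum_{i}\varepsilon_i A(F_iX)F_iY+\sum_i\xi_i A(F_iIX)F_iIY.$$
Using $IF_i=(-1)^{|F_i|}F_iI$ (anticommuting $I$ past the $|F_i|$ generators of $F_i$) together with $I^2=\eta E$, I would compute $\widetilde S^A(IX,Y)$ and $I\,\widetilde S^A(X,Y)$ and match coefficients; both the $A(F_iIX)F_iY$ and the $A(F_iX)F_iIY$ pairings force the single relation $\xi_i=\eta\,(-1)^{|F_i|}\varepsilon_i$, which is the exact analogue of the relation $\epsilon_i\Upsilon(F_iX)=-\xi_i(F_iX)$ appearing in the proof of the Theorem. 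Linearity over the old generators is inherited by the first sum from the inductive hypothesis, and by the second sum because $\{F_iI\}$ is again a signed basis whose signs follow the same pattern. Iterating the relation, one reads off the closed form $\varepsilon_F=(-1)^{\binom{\ell}{2}}\prod_{c}\eta_{c}$ for a basis affinor $F$ equal to a product of $\ell$ distinct generators $I_c$.

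The one point that needs genuine care—and that I expect to be the main obstacle—is the sign bookkeeping, i.e.\ checking that the family of sign-relations coming from the different generators is simultaneously consistent. Concretely, each generator $I_c$ and each basis affinor $F$ not involving $I_c$ produce a relation $\varepsilon_{FI_c}=(-1)^{|F|}\eta_c\,\varepsilon_F$, and one must verify that the resulting prescription for $\varepsilon_F$ is independent of the order in which the generators of $F$ are adjoined; this reduces to the Pascal identity $\binom{\ell+1}{2}=\binom{\ell}{2}+\ell$, which is precisely what makes the closed form above well defined. Once this consistency is in hand, the coefficient comparison from the first paragraph finishes the proof.
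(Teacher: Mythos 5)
Your proof is correct, and at bottom it is the same argument as the paper's: both determine the signs recursively from the requirement that the coefficients of $A(F_aX)F_bY$ agree in $S^A(I_jX,Y)$ and $I_jS^A(X,Y)$ (a reduction you justify more explicitly via Lemma \ref{l1}), and both must then check that the recursive prescription is consistent. The routes differ in organization and in completeness. The paper works inside a fixed algebra and recurses along the gradation $\cc l^0\oplus\cdots\oplus\cc l^n$ through relations $\varepsilon_k=\pm\varepsilon_i$ according to whether $I_jF_kI_j=\pm F_k$, and its order-independence check is carried out only for an alternating pair of generators, with no explicit formula for the signs. You induct on the number of generators, splitting the basis as $\{F_i\}\cup\{F_iI\}$, and extract the closed form $\varepsilon_F=(-1)^{\binom{\ell}{2}}\prod_c\eta_c$ for a product of $\ell$ distinct generators $I_c$ with $I_c^2=\eta_cE$; since this depends only on $\ell$ and the unordered set of generators, well-definedness --- hence consistency for arbitrarily many generators, not just two --- is immediate. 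That closed form is genuinely valuable: it is what the paper's algorithm computes but never states. You also correctly repair the statement by letting the sum in \eqref{SAdef} run over all $k=2^{s+t}$ basis affinors rather than only the $n$ generators, which is what the paper's own proof tacitly does. The one step you should write out rather than assert is that the identity for the \emph{old} generators is ``inherited'' by the new block $\sum_i\xi_iA(F_iIX)F_iIY$: it is true, but it requires the same coefficient comparison as the rest, and the clean way to see it is that your closed form satisfies the single family of relations $\varepsilon_G=(-1)^{\deg F}\eta_j\,\varepsilon_F$ whenever $G$ is the basis element proportional to $FI_j$ with $I_j\notin F$, and this family is exactly what the coefficient matching demands for every generator $I_j$ and every basis element, whether or not it involves the newly adjoined generator.
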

\begin{proof}
Let us consider the gradation of the Clifford algebra $\cc l(s,t)=\cc l^0\oplus\cc l^1\oplus ...\oplus\cc l^n$ with respect to the generators of $\cc l(s,t).$ Then we can define gradually: for $E\in\cc l^0$ we choose $\varepsilon=1$. If the identity \eqref{SAident} should be satisfied for the terms in \eqref{SAdef}, then it must hold
$$\varepsilon_0A(I_jX)Y=\varepsilon_iA(I_jX)I_jI_jY\ \text{for all}\ I_j,$$
i.e.
\begin{equation*}
\varepsilon_i=
\begin{cases} \phantom{-}1 & \text{for $I_j^2=1$,}
\\
-1 &\text{for $I_j^2=-1$.}
\end{cases}
\end{equation*}
For $F_i\in\cc l^v$ the following equality holds:
$$\varepsilon_iA(F_iI_jX)F_iY=\varepsilon_kA(F_kX)I_jF_kY$$
and thus $F_i=I_jF_k.$ Note that $F_k$ can be an element of both $\cc l^{v+1}$ and $\cc l^{v-1}.$ W.l.o.g. we choose $I_j$ such that $F_k\in\cc l^{v+1}.$ Now two possibilities can appear:
 Either \begin{equation}\label{eq1} F_iI_j=F_k,\end{equation} which leads to $I_jF_kI_j=F_k$ and thus $\varepsilon _k=\varepsilon_i,$ or
\begin{equation}\label{eq2} F_iI_j=-F_k,\end{equation} which leads to $I_jF_kI_j=-F_k$ and thus $\varepsilon _k=-\varepsilon_i.$

This concludes the definition of $\varepsilon_i$ such that the identity \eqref{SAident} holds. To prove the consistency, we have to show that the value of $\varepsilon_k$ does not depend on $I_j,$ i.e. for the generators $I$ such that $I^2=1$ and $J$ such that $J^2=-1,$ the resulting coefficient $\varepsilon_k$ obtained after two consequent steps of the algorithm with the alternate use of both $I$ and $J$, does not depend on the order.
Thus let us consider the following cases:
\begin{itemize}
\item[(a)] $F_i=IF_k,$ which results into the possibilities $IF_kI=F_k,$ see \eqref{eq1}, which leads to $\varepsilon_k=\varepsilon_i,$ or $IF_kI=-F_k,$ see \eqref{eq2}, which leads to $\varepsilon_k=-\varepsilon_i.$
\item[(b)] $F_j=JF_k,$which similarly leads to either $JF_kJ=F_k$ implying $\varepsilon_k=\varepsilon_j,$ or
$JF_kJ=-F_k$ implying $\varepsilon_k=-\varepsilon_j.$
\end{itemize}
Applying the processes (a) and (b) alternately we obtain:
$$JIF_kJ=\begin{cases}-IF_k & \Rightarrow \varepsilon_l=-\varepsilon_i\\
\phantom{-}IF_k & \Rightarrow\varepsilon_l=\varepsilon_i\end{cases}$$
for $F_l=JIF_k$ and
$$IJF_kI=\begin{cases}-JF_k & \Rightarrow \varepsilon_l=-\varepsilon_j\\
\phantom{-}JF_k & \Rightarrow\varepsilon_l=\varepsilon_j\end{cases}$$
for $F_l=IJF_k.$ Obviously, the corresponding cases give the same result of $\varepsilon_k.$
\end{proof}

\begin{thm}  Let $\mathcal O$ be the Clifford algebra $\cc l(s,t)$.
For any one--form $\xi$ on $\mathbb V$ and any $X,Y \in \mathbb V$, the elements of the form
$$ S_X^{\xi} (Y) = \sum_{i=1}^{k} \epsilon_i (\xi(F_iX)F_iY + \xi(F_i Y)F_i X),\ k=2^{s+t},$$
where the coefficients $\epsilon_i$ depend on the type of $\mathcal O,$
belong to the first prolongation $\fg^{(1)}$ of the Lie algebra $\fg$ of the
Lie group $GL (m, \mathcal O)GL(1, \mathcal O)$.
\end{thm}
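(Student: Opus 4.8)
The plan is to verify directly that the symmetric bilinear map $S^{\xi}$ lands in $\mathfrak{g}^{(1)}$, where $\mathfrak g = \mathfrak{gl}(m,\mathcal O) \oplus \mathfrak{gl}(1,\mathcal O)$. By the definition of the first prolongation and the symmetry built into $S^{\xi}_X(Y)$ (which is manifest, since swapping $X$ and $Y$ permutes the two summands), it suffices to fix $X$ and show that the linear map $Y \mapsto S^{\xi}_X(Y)$ lies in $\mathfrak g$. The key observation is that $\mathfrak g$ is strictly larger than $\mathfrak{gl}(m,\mathcal O)$: membership in $\mathfrak g$ does not require commuting with each generator $I_j$ exactly, but only \emph{up to an element of} $\mathfrak{gl}(1,\mathcal O)$, i.e. up to a real linear combination of right-multiplications by the affinors $F_l$. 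Thus the target identity is not $S^{\xi}_X(I_jY) = I_j S^{\xi}_X(Y)$ but rather
\begin{equation*}
S^{\xi}_X(I_jY) - I_j S^{\xi}_X(Y) = \sum_{l} a_l F_l(Y)
\end{equation*}
for some real coefficients $a_l$ depending on $X$ and $\xi$. This is exactly the relation checked by hand for $\cc l(0,2)$ in Remark~\ref{lema1}.

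First I would split $S^{\xi}_X$ into its two symmetric halves and reduce, using symmetry, to analysing the single family $\sum_i \epsilon_i \xi(F_i X) F_i Y$ together with its partner $\sum_i \epsilon_i \xi(F_i Y) F_i X$. The second partner is immediately handled: left-composing with $I_j$ turns $\sum_i \epsilon_i \xi(F_i Y) F_i X$ into $\sum_i \epsilon_i \xi(F_i Y)(I_j F_i) X$, and since each $I_j F_i$ is, up to sign, another generator-monomial $F_{l}$, the whole expression is visibly of the form $\sum_l a_l F_l(Y)$ with $a_l$ linear in $X$ — it contributes only to the allowed $\mathfrak{gl}(1,\mathcal O)$-correction term. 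The real content therefore sits in the first family $T(Y) := \sum_i \epsilon_i \xi(F_i X) F_i Y$, where $\xi(F_i X)$ is a fixed scalar and $Y$ is acted on the left. Here I would invoke Lemma~\ref{koef}: the coefficients $\epsilon_i \in \{\pm 1\}$ are precisely chosen so that the tensor $\sum_i \epsilon_i A(F_i X) F_i Y$ satisfies $S^A(I_j X, Y) = I_j S^A(X,Y)$, which after transcription says exactly that $T$ commutes with each $I_j$, hence $T \in \mathfrak{gl}(m,\mathcal O)$.

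Putting the two pieces together, $S^{\xi}_X = T + (\text{partner})$ decomposes as an element of $\mathfrak{gl}(m,\mathcal O)$ plus a term that commutes with every $I_j$ only modulo $\mathfrak{gl}(1,\mathcal O)$; both summands therefore lie in $\mathfrak g = \mathfrak{gl}(m,\mathcal O) \oplus \mathfrak{gl}(1,\mathcal O)$, and so does $S^{\xi}_X$. Combined with the evident symmetry $S^{\xi}_X(Y) = S^{\xi}_Y(X)$, this places $S^{\xi}$ in $\mathfrak g^{(1)}$, which is the claim. The main obstacle, and the step I would write out most carefully, is the bookkeeping of signs when $I_j F_i = \pm F_l$: one must confirm that the sign changes produced by left-multiplication by $I_j$ are consistent with the choice of $\epsilon_i$ from Lemma~\ref{koef}, so that the $T$-part genuinely commutes with each generator and the leftover terms collect cleanly into the $\mathfrak{gl}(1,\mathcal O)$ correction. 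This is where the general argument replaces the explicit $\cc l(0,2)$ computation of Remark~\ref{lema1}, and it is essentially a repackaging of the consistency check already performed at the end of the proof of Lemma~\ref{koef}.
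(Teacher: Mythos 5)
Your overall architecture matches the paper's: verify symmetry, then show that the defect $S^{\xi}_X(I_jY)-I_jS^{\xi}_X(Y)$ lies in $\langle F_1Y,\dots,F_kY\rangle$, splitting $S^{\xi}_X$ into the half $T(Y)=\sum_i\epsilon_i\xi(F_iX)F_iY$ and its partner $\sum_i\epsilon_i\xi(F_iY)F_iX$ and treating the two halves by different mechanisms. However, you have attached the wrong mechanism to each half, and both of your sub-claims are false as stated. Lemma \ref{koef} asserts $S^A(I_jX,Y)=I_jS^A(X,Y)$ for $S^A(X,Y)=\sum_i\varepsilon_iA(F_iX)F_iY$; it controls the variable sitting \emph{inside} $A(\cdot)$, not the variable on which $F_i$ acts. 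It therefore says nothing about $T$ commuting with $I_j$ in the $Y$-slot: with $X$ fixed, $T$ is left multiplication by the algebra element $\sum_i\epsilon_i\xi(F_iX)F_i$, which commutes with every generator only if that element is central, so in general $T\notin\mathfrak{gl}(m,\mathcal O)$. What is true is that $T(I_jY)-I_jT(Y)=\sum_i\epsilon_i\xi(F_iX)(F_iI_j-I_jF_i)Y$, and since each $F_i$ either commutes or anticommutes with $I_j$, each summand is $0$ or $\mp 2\epsilon_i\xi(F_iX)\,I_jF_iY$, a combination of the $F_lY$ with coefficients depending only on $X$ and $\xi$. So $T$ is the half that produces the allowed correction term, not the half lying in $\mathfrak{gl}(m,\mathcal O)$.

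Symmetrically, the partner is precisely the map $Y\mapsto S^{\xi}(Y,X)$ to which Lemma \ref{koef} does apply: its defect $\sum_i\epsilon_i\xi(F_iI_jY)F_iX-I_j\sum_i\epsilon_i\xi(F_iY)F_iX$ vanishes identically. Your treatment of the partner computes only $I_j\circ(\text{partner})$ rather than the commutator defect, and the expression you obtain, $\sum_i\epsilon_i\xi(F_iY)(I_jF_i)X$, is a sum of rank-one operators in $Y$ (the scalar $\xi(F_iY)$ times a fixed vector built from $X$); this is \emph{not} of the form $\sum_la_lF_l(Y)$ with coefficients independent of $Y$, hence not an admissible $\mathfrak{gl}(1,\mathcal O)$ correction. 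The two errors are complementary: interchanging your two justifications --- Lemma \ref{koef} annihilates the partner's defect exactly, while the elementary sign bookkeeping $F_iI_j=\pm I_jF_i$ places the defect of $T$ into $\langle F_1Y,\dots,F_kY\rangle$ --- recovers the paper's argument. As written, though, neither of your two steps is correct.
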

 \begin{proof}
% To by $S_X^{\xi} (Y)$ in $\mathfrak g^{(1)}$ the term $S_X^{\xi} (Y)$ has to be equal
% to $S_Y^{\xi} (X)$ and belongs to $\mathfrak g^{(1)}$.
One can easily see that
$S^\xi$ is symmetric and we have to prove the second condition, i.e.
$S_X^{\xi}I_iY -I_iS_X^{\xi}Y \in \mathcal O(Y)$, i.e.
\begin{align*}
S_X^{\xi}I_iY -I_iS_X^{\xi}Y &=
\sum_{j=1}^{k} \bar{\epsilon}_j \xi(F_jX)F_j Y
+\sum_{j=1}^{k} \bar{\epsilon}_j \xi(F_jY)F_jX \\
&- \sum_{j=1}^{k} \bar{\epsilon}_j I_i\xi(F_jX)F_jY
-\sum_{j=1}^{k} \bar{\epsilon}_j I_i\xi(F_jY)F_jTX.
%&= \sum_{j=0}^{k} \psi_i F_j Y.
\end{align*}
From Lemma \ref{koef} we have
$$
S_X^{\xi}I_iY -I_iS_X^{\xi}Y =
\sum_{j=1}^{k} \bar{\epsilon}_j \xi(F_jX)F_j Y
-\sum_{j=1}^{k} \bar{\epsilon}_j I_i\xi(F_jY)F_jTX
= \sum_{j=0}^{k} \psi_i F_j Y.$$
\end{proof}

\begin{cor}
Let $M$ be an almost  Cliffordian manifold based on Clifford algebra 
$\mathcal O= \cc l(s,t)$, where $\dim(M) \geq 2(s+t)$, i.e. smooth manifold  equipped
with $G$--structure, where $G= GL (n, \mathcal O)GL(1, \mathcal O)$
or equivalently $A$--structure where $A=\mathcal O$. Then the class of $\mathcal D$--connections
preserving $A$ and sharing the same $A$--planar curves is
isomorphic to $(\brr^{km})^*.$
\end{cor}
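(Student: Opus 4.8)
The plan is to recognise the statement as the main Theorem of Section~3 restricted to $\mathcal D$--connections, and to verify that this restriction costs nothing: the whole class produced there already consists of $\mathcal D$--connections sharing one fixed torsion. I would begin from the theorem of \cite{am}. It guarantees that a $\mathcal D$--connection $\nabla$ exists and that any two $\mathcal D$--connections differ by a tensor field $S$ whose pointwise values lie in the first prolongation $\fg^{(1)}$. Every element of $\fg^{(1)}$ is a symmetric bilinear map, so adding such an $S$ does not alter the torsion; consequently all $\mathcal D$--connections carry the same torsion, and the space of $\mathcal D$--connections is an affine space modelled on sections of the bundle with fibre $\fg^{(1)}$.

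Next I would isolate the deformations that preserve the $A$--planar curves. Writing $\bar\nabla=\nabla+S$ one has $\bar\nabla_{\dot c}\dot c-\nabla_{\dot c}\dot c=S(\dot c,\dot c)$, so the two connections share their $A$--planar curves precisely when $S(X,X)\in A(X)$ for every tangent vector $X$. The Theorem immediately preceding this corollary exhibits, for each one--form $\xi$, the tensor $S_X^{\xi}(Y)=\sum_{i=1}^{k}\epsilon_i\bigl(\xi(F_iX)F_iY+\xi(F_iY)F_iX\bigr)$ as an element of $\fg^{(1)}$, and on the diagonal $S^{\xi}(X,X)=2\sum_{i=1}^{k}\epsilon_i\,\xi(F_iX)\,F_iX$ lies in the $A$--hull $A(X)$ because each $F_iX$ does. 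Hence $\nabla+S^{\xi}$ is again a $\mathcal D$--connection preserving $A$ and sharing the $A$--planar curves of $\nabla$, and $\xi\mapsto\nabla+S^{\xi}$ defines a map from $\mathbb V^{*}=(\brr^{km})^{*}$ into the class under study.

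For the reverse inclusion I would take an arbitrary $\mathcal D$--connection $\bar\nabla=\nabla+S$ that preserves $A$ and has the same $A$--planar curves as $\nabla$; then $S\in\fg^{(1)}$ and $S(X,X)\in A(X)$ for all $X$. The polarisation argument of the Section~3 Theorem --- which forces the generating one--form to be additive, hence linear, and matches every such symmetric Clifford--bilinear tensor with one of the $S^{\xi}$ --- yields $S=S^{\xi}$ for a unique $\xi$. Injectivity of $\xi\mapsto S^{\xi}$ is supplied by Lemma~\ref{l1}: in each of the $m$ Clifford blocks of $\mathbb V=\brr^{km}$ one picks $X$ with $\langle F_1X,\dots,F_kX\rangle=\brr^{k}$, so that evaluating $S^{\xi}(X,X)=2\sum_i\epsilon_i\xi(F_iX)F_iX=0$ against this basis forces $\xi(F_iX)=0$ for all $i$; running over the blocks gives $\xi=0$. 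Thus $\xi\mapsto\nabla+S^{\xi}$ is a fibrewise bijection onto the class of $\mathcal D$--connections preserving $A$ and sharing the same $A$--planar curves, and since $\dim M=km$ this class is isomorphic to $(\brr^{km})^{*}$.

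I expect the substance of the argument to sit entirely in the reverse inclusion: the point is that the two constraints $S\in\fg^{(1)}$ and $S(X,X)\in A(X)$ together pin down the rigid shape $S=S^{\xi}$, which is exactly the output of the polarisation computation in the Section~3 Theorem, while the \cite{am} theorem guarantees that demanding a $\mathcal D$--connection adds no restriction beyond fixing the common torsion. A secondary care point is the local nature of the affinors $F_i$ on a genuinely Cliffordian manifold: I would phrase the isomorphism through the globally defined subbundle $A$ and the tensor $\sum_{i}\epsilon_i(\xi\circ F_i)\odot F_i$, checking that this combination is invariant under the $GL(1,\mathcal O)$ part of the structure group even though its individual summands are not.
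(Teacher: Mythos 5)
Your proposal is correct and follows essentially the same route the paper intends: the corollary is stated without an explicit proof, and the intended derivation is exactly your assembly of the \cite{am} theorems (all $\mathcal D$--connections differ by sections of $\fg^{(1)}$ and share torsion), the preceding Theorem exhibiting $S^{\xi}\in\fg^{(1)}$, the Section~3 parametrization by one--forms via polarization, and Lemma~2.1 for injectivity. Nothing in your argument departs from the paper's framework, so there is nothing further to compare.
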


\section*{Acknowledgment}
This work was supported by the European Regional Development Fund in the IT4Innovations Center of Excelence project CZ.1.05/1.1.00/02.0070.
% First author was supported by the grant
%  GA \v{C}R, grant no. 201/09/0981, the second author by the grant no. FSI-S-11-3.

\end{document}